\newtheorem{lemma}{Lemma}[section]
\newtheorem{theorem}{Theorem}[section]
\newtheorem{cor}{Corollary}[section]
\numberwithin{equation}{section}
\newcommand{\beq}[1]{\begin{equation}\label{#1}}
\newcommand{\eeq}{\end{equation}}
\title[Products of sum sets]{A short proof of a near-optimal cardinality estimate for the product of a sum set}
\author[ O. Roche-Newton]{Oliver Roche-Newton}
\address{O. Roche-Newton: Johann Radon Institute for Computational and Applied Mathematics (RICAM), Austrian Academy of Sciences, 4040 Linz, Austria }
\email{o.rochenewton@gmail.com }
\begin{document}

\begin{abstract}
In this note it is established that, for any finite set $A$ of real numbers, there exist two elements $a,b \in A$ such that

$$|(a+A)(b+A)| \gg \frac{|A|^2}{\log |A|}.$$

In particular, it follows that $|(A+A)(A+A)| \gg \frac{|A|^2}{\log |A|}$. The latter inequality had in fact already been established in an earlier work of the author and Rudnev \cite{rectangles}, which built upon the recent developments of Guth and Katz \cite{GK} in their work on the Erd\H{o}s distinct distance problem. Here, we do not use those relatively deep methods, and instead we need just a single application of the Szemer\'{e}di-Trotter Theorem. The result is also qualitatively stronger than the corresponding sum-product estimate from \cite{rectangles}, since the set $(a+A)(b+A)$ is defined by only two variables, rather than four. One can view this as a solution for the pinned distance problem, under an alternative notion of distance, in the special case when the point set is a direct product $A \times A$. Another advantage of this more elementary approach is that these results can now be extended for the first time to the case when $A \subset \mathbb C$.
\end{abstract} 

\maketitle
\section{Introduction}
In this note, we consider a variation on the sum-product problem, in which the aim is to show that certain sets defined by a combination of additive and multiplicative operations will always be large. For example, given a finite set $A$ of real numbers, define
$$(A-A)(A-A):=\{(a-b)(c-d):a,b,c,d \in A\}.$$
By the same heuristic arguments that support the Erd\H{o}s-Szemer\'{e}di sum-product conjecture, one expects that $(A-A)(A-A)$ will always be large in comparison to the input set $A$. In \cite{rectangles}, the following\footnote{Here and throughout this paper, for positive values $X$ and $Y$ the notation $X \gg Y$ is used as a shorthand for $X\geq cY$, for some absolute constant $c>0$. If both $X \gg Y$ and $X \ll Y$ hold, we may write $X \approx Y$.} bound was established which showed that this is indeed the case:
\begin{equation}
|(A-A)(A-A)| \gg \frac{|A|^2}{\log |A|}.
\label{sp1}
\end{equation}
The same argument in \cite{rectangles} yields the same lower bound for $|(A+A)(A+A)|$. Some other interesting results in this direction can be found in \cite{ENR}, \cite{TJthesis}, \cite{TJ}, \cite{MORNS}, \cite{RSS} and \cite{ungar}, amongst others.

In all of the aforementioned works, incidence geometry plays a central role. An extremely influential result in this area is the Szemer\'{e}di-Trotter Theorem, which says that, given finite sets $P$ and $L$ of points and lines respectively in $\mathbb R^2$, the number of incidences between $P$ and $L$ satisfies the upper bound
\begin{equation}
|\{(p,l) \in P \times L : p \in l\}| \ll |P|^{2/3}|L|^{2/3}+|P|+|L|.
\label{ST}
\end{equation}
The quantity on the left hand side of the above inequality is usually denoted by $I(P,L)$. Incidence geometry also played a central role in the recent landmark work of Guth and Katz \cite{GK} on the Erd\H{o}s distinct distances problem. Guth and Katz established an incidence bound for points and lines in $\mathbb R^3$, which was then used to prove that for any finite set $P$ of points in $\mathbb R^2$, the set of distinct distances determined by $P$ has near-linear size. To be precise, they proved that
\begin{equation}
|\{d(p,q):p,q \in P\}| \gg \frac{|P|}{\log |P|},
\label{GK}
\end{equation}
where $d(p,q)$ denotes the Euclidean distance between $p$ and $q$. Note that the example $P=[N] \times [N]$, where $[N]=\{1,2,\dots,N\}$, illustrates that this bound is close to best possible.

One of the tools that Guth and Katz use in their analysis is the Szemer\'{e}di-Trotter Theorem. They also introduced polynomial partitioning, and utilise some non-trivial facts from algebraic geometry.

In \cite{rectangles} the authors considered the pseudo-distance $R(p,q)$ in place of $d(p,q)$, where $R(p,q)$ denotes the (signed) area of the axis-parallel rectangle with $p$ and $q$ at opposite corners. To be precise, for two points $p=(p_1,p_2)$ and $q=(q_1,q_2)$ in the plane, we define
$$R(p,q):=(p_1-q_1)(p_2-q_2)$$ 
It was then possible to apply the incidence result of Guth and Katz to establish that
\begin{equation}
|\{R(p,q):p,q \in P\}| \gg \frac{|P|}{\log |P|},
\label{recdis}
\end{equation}
and \eqref{sp1} followed as a corollary after taking $P= A \times A$. Once again, the example $P=[N] \times [N]$ shows that this bound is close to best possible.

In this note, we prove the following result which strengthens \eqref{sp1}:

\begin{theorem} \label{mainthm} For any set $A \subset \mathbb R$, there exist elements $a,a' \in A$ such that
$$|(A-a)(A-a')| \gg \frac{|A|^2}{\log |A|}.$$
\end{theorem}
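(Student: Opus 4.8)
The plan is to fix an element $a' \in A$ to be chosen later, and to study the set $P = (A-a') \times (A-a')$, or rather to set up a point-line incidence configuration that captures the quantity $(A-a)(A-a')$ for varying $a$. The key idea is that if $|(A-a)(A-a')|$ is small for \emph{every} $a \in A$, then the product set $(A-a)(A-a')$ has many representations, which translates into many incidences between a suitable family of points and a suitable family of lines (or hyperbolas, which after a logarithmic change of variables become lines). First I would introduce, for each $a \in A$, the set $Q_a := (A-a)(A-a')$, and let $\tau := \min_{a \in A} |Q_a|$. The goal is to show $\tau \gg |A|^2/\log|A|$.

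Next I would count, for a fixed $a'$, the number of solutions to $(x - a)(y - a') = (x' - a)(y' - a')$ with $x,x',y,y' \in A$ — or more precisely set up the incidence problem so that each curve $\{(x,y) : (x-a)(y-a') = \lambda\}$ for $\lambda \in Q_a$ passes through all $|A|$ points $(x, \cdot)$ appropriately. The curves $(x-a)(y-a') = \lambda$ are hyperbolas; applying the map $(x,y) \mapsto (\log|x-a|, \log|y-a'|)$ (handling signs and the points where $x=a$ or $y=a'$ separately, which costs only $O(|A|)$ and is absorbed) turns them into lines of slope $-1$. To get genuinely two-parameter families of lines and apply Szemer\'{e}di--Trotter non-trivially, I would instead consider the lines through pairs of points of $A \times A$ and count incidences, using the Cauchy--Schwarz inequality to relate $\sum_a |Q_a|$ and $\sum_a \sum_{\lambda} r_a(\lambda)^2$, where $r_a(\lambda)$ is the number of representations of $\lambda$ in $Q_a$, then bound $\sum_a \sum_\lambda r_a(\lambda)^2$ via \eqref{ST}.

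More concretely: the quantity $\sum_{a \in A} \sum_{\lambda} r_a(\lambda)^2$ counts quadruples, and by a dyadic/popularity decomposition it is controlled by incidences of the point set $P = (A-a') \times (A-a')$ (for a typical good $a'$, chosen by averaging) with the set $L$ of lines of the form $y = m(x - t)$ passing through the origin-shifted copies — equivalently lines determined by pairs $(a - a', \cdot)$. One then has $|P| = |A|^2$ and $|L| \le |A|^2$, so \eqref{ST} gives $I(P,L) \ll |A|^{8/3}$, and combined with the Cauchy--Schwarz lower bound $\sum_a |Q_a| \ge \sum_a |A|^4 / \left(\sum_\lambda r_a(\lambda)^2\right)$ this yields the theorem after checking that the "junk" terms $|P| + |L|$ in \eqref{ST} do not dominate, which holds since $|A|^{8/3} \gg |A|^2$.

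The main obstacle, I expect, is choosing the fixed element $a'$ correctly and making sure the incidence count one extracts is genuinely $|A|^{8/3}$ rather than something weaker: one must ensure the lines involved are \emph{distinct} (or that multiplicities are controlled), and that the averaging over $a'$ does not lose a logarithmic factor in the wrong place — the single $\log|A|$ in the statement should come from one dyadic summation over the level sets of $r_a(\lambda)$, so every other step must be loss-free. A secondary technical point is dealing with the degenerate incidences where $x = a$, $y = a'$, or $\lambda = 0$, but these contribute only $O(|A|)$ to the relevant counts and are harmless. Handling $A \subset \mathbb{C}$ at the end requires only that the Szemer\'{e}di--Trotter Theorem holds for points and lines in $\mathbb{C}^2$, which is known.
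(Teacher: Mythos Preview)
Your overall strategy matches the paper's: reduce via Cauchy--Schwarz to a multiplicative-energy bound, control that energy by Szemer\'{e}di--Trotter, then pigeonhole to find good $a,a'$. But the central geometric step is missing from your plan, and several of the concrete moves you describe do not lead there.

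The paper does \emph{not} fix $a'$ first. It bounds the full double sum
\[
Q \;=\; \sum_{a,a'\in A} E^*(A-a,\,A-a') \;\ll\; |A|^4\log|A|,
\]
and only then pigeonholes on the pair $(a,a')$ simultaneously. The crucial observation you are missing is that, after rewriting the energy equation $(a-b)(a'-c')=(a-c)(a'-b')$ as
\[
\frac{a-b}{a'-b'} \;=\; \frac{a-c}{a'-c'},
\]
a (non-degenerate) solution is exactly a \emph{collinear triple} of distinct points $(a',a),(b',b),(c',c)$ in $P=A\times A$. Hence $Q^* \le \sum_{\ell\in L(P)} |\ell\cap P|^3$, and this third moment is bounded by the rich-lines corollary of Szemer\'{e}di--Trotter plus a single dyadic sum: $|L_{2^j}|\ll |P|^2/2^{3j}$ for $2^j\le |A|$, so each dyadic scale contributes $\ll |A|^4$ and there are $O(\log|A|)$ scales. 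That is where the lone logarithm comes from.

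Your discussion of hyperbolas, logarithms, and the bound $I(P,L)\ll |A|^{8/3}$ is a detour. An incidence count $I(P,L)$ is a first moment $\sum_\ell |\ell\cap P|$; what is actually required is the third moment $\sum_\ell |\ell\cap P|^3$, and $|A|^{8/3}$ plays no role in the final arithmetic. Likewise, trying to bound $\sum_a E^*(A-a,A-a')$ for a \emph{fixed} $a'$ before averaging cannot be done uniformly in $a'$; you would have to bound the double sum anyway, so the ``fix $a'$ first'' framing just obscures the argument. The degenerate cases ($b=c$, or either side zero) you flagged are indeed harmless, contributing $O(|A|^4)$.

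In short: keep your Cauchy--Schwarz/averaging skeleton, drop the hyperbola manoeuvre, and insert the collinear-triples identification --- that is the single idea that makes the proof work.
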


Here, we obtain quadratic growth for a set which depends on only two variables. There are similarities here with the Erd\H{o}s pinned distance problem, where the aim is to show that, for any finite set $P\subset \mathbb R^2$, there exists $p \in P$ such that
$$|\{d(p,q):q \in P\}| \gg \frac{|P|}{\sqrt{\log |P|}}.$$
This harder version of the Erd\H{o}s distinct distance problem remains open, with the current best-known result, due to Katz and Tardos \cite{KT}, stating that there exists $p \in P$ such that 
$$|\{d(p,q):q \in P\}|\gg |P|^{\alpha},$$ 
where $\alpha \approx 0.864$. However, Theorem \ref{mainthm} shows that, if we instead consider the pseudo-distance $R(p,q)$ then we have a near-optimal bound for the corresponding pinned distance problem, in the special case when $P=A \times A$ is a direct product. Such a result, even with the additional direct product restriction, is not currently known for Euclidean distance.

Another advantage of the approach in this paper is that the proof is relatively straightforward. In particular, we obtain a new proof of \eqref{sp1}, and in fact a stronger result, without utilising the Guth-Katz machinery.

This paper is closely related to work contained in the PhD thesis of Jones \cite{TJthesis} on the growth of sets of real numbers. In fact, the main lemma here, the forthcoming Lemma \ref{mainlemma}, forms part of the proof of \cite[Theorem 5.2]{TJthesis}, although it is expressed rather differently there in terms of the notion of the cross-ratio. Consequently, we are able to give a new proof of Theorem 5.2 from \cite{TJthesis}; that is we establish the following three-variable expander bound
$$\left|\left\{\frac{a-b}{a-c}:a,b,c \in A\right\}\right| \gg \frac{|A|^2}{\log |A|}.$$

It appears that the proof here is more straightforward than the one originally given by Jones \cite{TJthesis}.

The only major tool needed in this paper is the Szemer\'{e}di-Trotter Theorem. In particular, we use the following standard corollary of \eqref{ST} which bounds the number of rich lines in an incidence configuration:

\begin{cor}\label{STcor}[Szemer\'{e}di-Trotter Theorem]

Let $P$ be a set of points in $\mathbb R^2$ and let $k \geq 2$ be a real number. Define $L_k$ to be the set of lines containing at least $k$ points from $P$. Then
\begin{equation}
|L_k| \ll \frac{|P|^2}{k^3}+\frac{|P|}{k}.
\label{STcor1}
\end{equation}
In particular, if $k \leq |P|^{1/2}$, then
\begin{equation}
|L_k| \ll \frac{|P|^2}{k^3}.
\label{STcor2}
\end{equation}

\end{cor}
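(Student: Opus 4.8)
The plan is the textbook deduction of a rich-lines estimate from the Szemer\'{e}di--Trotter bound \eqref{ST} by double counting.

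First I would bound the number of incidences between $P$ and the line set $L_k$ from below: every line of $L_k$ contains at least $k$ points of $P$, so
$$I(P,L_k) = \sum_{\ell \in L_k} |\ell \cap P| \geq k\,|L_k|.$$
Applying \eqref{ST} with this $P$ and $L_k$ then gives $k|L_k| \ll |P|^{2/3}|L_k|^{2/3} + |P| + |L_k|$, so at least one of the three terms on the right is $\gg k|L_k|$. If it is the first term, rearranging gives $|L_k| \ll |P|^2/k^3$; if it is the second, $|L_k| \ll |P|/k$. This yields \eqref{STcor1} except for the contribution of the third term.

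The third term is the only place needing care, since $|L_k| \gg k|L_k|$ only says $k \ll 1$. To deal with this I would first record the trivial pair-counting bound $|L_k| \leq \binom{|P|}{2} \ll |P|^2$, valid for every $k \geq 2$ (assign to each line of $L_k$ an arbitrary pair of the points of $P$ lying on it; distinct lines receive distinct pairs). When $k$ is bounded this already gives $|L_k| \ll |P|^2 \ll |P|^2/k^3$, and when $k$ exceeds a suitable absolute constant the term $|L_k|$ is at most half of $k|L_k|$ and can be absorbed into the left-hand side, reducing to the case analysis of the previous paragraph. This proves \eqref{STcor1}. Finally \eqref{STcor2} is immediate: if $k \leq |P|^{1/2}$ then $k^2 \leq |P|$, hence $|P|/k \leq |P|^2/k^3$, and the first term of \eqref{STcor1} dominates. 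I do not expect any genuine obstacle here --- the argument is entirely standard --- and the only mild subtlety, the additive $|L_k|$ term in \eqref{ST}, is disposed of by the case analysis above.
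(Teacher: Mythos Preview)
Your argument is correct and is exactly the standard deduction the paper has in mind; the paper does not supply its own proof of this corollary, simply calling it a ``standard corollary'' of \eqref{ST}. Your handling of the additive $|L_k|$ term via the trivial pair-counting bound for bounded $k$ and absorption for large $k$ is the usual way to close the gap, and the derivation of \eqref{STcor2} from \eqref{STcor1} is immediate as you say.
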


\section{Energy bound}
\begin{lemma} \label{mainlemma} Let $Q$ denote the number of solutions to the equation
\begin{equation} \label{energy}
(a-b)(a'-c')=(a-c)(a'-b')
\end{equation}
such that $a,a',b,b',c,c' \in A$. Then 
$$Q \ll |A|^4\log |A|.$$
\end{lemma}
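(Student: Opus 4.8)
The plan is to reinterpret \eqref{energy} geometrically. To a tuple $(a,b,c,a',b',c')\in A^6$, associate the three points
\[
P_1=(a',a),\qquad P_2=(b',b),\qquad P_3=(c',c),
\]
all of which lie in the Cartesian grid $G:=A\times A$. A direct expansion gives the algebraic identity
\[
(a-b)(a'-c')-(a-c)(a'-b')=-\det\begin{pmatrix}a'&a&1\\ b'&b&1\\ c'&c&1\end{pmatrix},
\]
so a tuple solves \eqref{energy} if and only if $P_1,P_2,P_3$ are collinear (a triple with a repeated point being collinear, consistently with the determinant vanishing). Since $(a,b,c,a',b',c')\mapsto(P_1,P_2,P_3)$ is a bijection from $A^6$ onto $G^3$, we conclude that $Q$ equals the number of ordered collinear triples of points in $G$. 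I view this reformulation as the heart of the argument; what remains is a routine incidence estimate.

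Next I would bound the number of ordered collinear triples in $G$. Triples in which at least two of $P_1,P_2,P_3$ coincide are automatically collinear, and there are at most $3|G|^2+|G|\le 4|A|^4$ of them, which is an acceptable error term. Every triple of three \emph{distinct} collinear points lies on a unique line, so the number of such ordered triples is
\[
\sum_{\ell}|\ell\cap G|\bigl(|\ell\cap G|-1\bigr)\bigl(|\ell\cap G|-2\bigr)\ \le\ \sum_{\ell}|\ell\cap G|^{3},
\]
where the sum runs over all lines $\ell$ meeting $G$ in at least two points. Hence $Q\le 4|A|^4+\sum_{\ell:\,|\ell\cap G|\ge 2}|\ell\cap G|^3$, and it suffices to show that this last sum is $O(|A|^4\log|A|)$.

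For this I would apply Corollary \ref{STcor} dyadically. The essential point is that the grid $G=A\times A$ contains no rich lines beyond the trivial threshold: a non-vertical line is the graph of an affine function over its $x$-coordinates, all of which lie in $A$, and a vertical line meets $G$ in at most $|A|$ points, so every line satisfies $|\ell\cap G|\le|A|$. Writing $L_k$ for the set of lines containing at least $k$ points of $G$, and using that $|G|=|A|^2$, so that $k=2^j\le|A|$ always satisfies $k\le|G|^{1/2}$, inequality \eqref{STcor2} gives $|L_{2^j}|\ll|A|^4/2^{3j}$. Summing over the at most $\lfloor\log_2|A|\rfloor$ relevant dyadic scales,
\[
\sum_{\ell:\,|\ell\cap G|\ge 2}|\ell\cap G|^{3}\ \le\ \sum_{j=1}^{\lfloor\log_2|A|\rfloor}\bigl(2^{\,j+1}\bigr)^3\,|L_{2^j}|\ \ll\ \sum_{j=1}^{\lfloor\log_2|A|\rfloor}2^{3j}\cdot\frac{|A|^4}{2^{3j}}\ \ll\ |A|^4\log|A|.
\]
Combined with the bound on degenerate triples, this yields $Q\ll|A|^4\log|A|$.

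The only step that requires an idea is the opening one --- recognising that \eqref{energy} is precisely the collinearity condition for the points $(a',a),(b',b),(c',c)$ of the grid $A\times A$. After that the proof is a textbook application of the Szemer\'{e}di--Trotter theorem, made efficient by the fact that a Cartesian product $A\times A$ admits no line with more than $|A|$ points; the only thing to be careful about in a full write-up is the bookkeeping of the degenerate configurations (coincident points, vertical lines), each of which is harmless and contributes only $O(|A|^4)$.
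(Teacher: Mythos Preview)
Your proof is correct and follows essentially the same approach as the paper: both reduce \eqref{energy} to counting collinear triples in the grid $A\times A$ and then bound $\sum_\ell|\ell\cap G|^3$ via a dyadic application of Corollary~\ref{STcor}, using that no line meets $A\times A$ in more than $|A|$ points. The only cosmetic difference is that you establish the collinearity interpretation in one stroke via the determinant identity, whereas the paper first peels off the degenerate cases and then passes through the slope form $\frac{a-b}{a'-b'}=\frac{a-c}{a'-c'}$.
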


\begin{proof}
First of all, the number solutions to \eqref{energy}  of the form
$$(a-b)(a'-c')=(a-c)(a'-b')=0,$$
is at most $4|A|^4$. Also, there are at most $|A|^4$ trivial solutions whereby $b=c$. Now, let $Q^*$ denote the number of solutions to
\begin{equation} \label{energy*}
(a-b)(a'-c')=(a-c)(a'-b')\neq 0,\,\,\,\,\,\,\,\,\,\,\,\,b \neq c.
\end{equation}
This is the same as the number of solutions to
\begin{equation} \label{energy**}
\frac{a-b}{a'-b'}=\frac{a-c}{a'-c'}\neq 0,\,\,\,\,\,\,\,\,\,\,\,\,\, b \neq c.
\end{equation}
Let $P=A\times A$ and let $L(P)$ denote the set of lines determined by $P$. That is, $L(P)$ is the set of lines supporting $2$ or more points from the set. Note that $a,a',b,b',c$ and $c'$ satisfy \eqref{energy**} only if the points $(a',a),(b',b)$ and $(c',c)$ from $P$ are collinear and distinct. Therefore,
\begin{align*}
Q^* &\leq \sum_{l\in L(P)}|l \cap P|^3
\\& \ll \sum_j \sum_{2^{j} \leq |l \cap P| < 2^{j+1}} |l \cap P|^3,
\end{align*}
where $j$ ranges over all positive integers such that $2^{j} \leq |A|$. Note that there are no lines in $L(P)$ which contain more than $|A|$ points from $P$, which is why this sum does not need to include any larger values of $j$.

For the aforementioned range of values for $j$, it follows from Corollary \ref{STcor}, and in particular bound \eqref{STcor2}, that
$$|\{l: |l\cap P| \geq 2^{j}\}| \ll \frac{|P|^2}{(2^j)^3}.$$
Therefore,
$$Q^* \ll \sum_j |P|^2 \ll |A|^4 \log|A|.$$
Finally, $Q \ll |A|^4 +Q^* \ll |A|^4 \log |A|$, as required.
\end{proof}

\begin{cor} \label{TJ}
For any finite set $A \subset \mathbb R$,
$$\left|\left\{\frac{a-b}{a-c}:a,b,c \in A\right\}\right| \gg \frac{|A|^2}{\log |A|}.$$
\end{cor}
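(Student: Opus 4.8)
The plan is to derive this as a routine consequence of the energy bound in Lemma 2.1 via the Cauchy–Schwarz inequality. Write $S := \left\{\frac{a-b}{a-c} : a,b,c \in A\right\}$ and let $r(x)$ denote the number of triples $(a,b,c) \in A^3$ with $\frac{a-b}{a-c} = x$, so that $\sum_{x \in S} r(x) = |A|^3$ (all such triples are counted, noting $a=c$ contributes nothing since then the ratio is undefined, so really the count is over triples with $a \neq c$; one has to be slightly careful here, but the number of triples with $a=c$ is only $|A|^2$, which is negligible). By Cauchy–Schwarz,
$$|A|^6 \ll \Bigl(\sum_{x \in S} r(x)\Bigr)^2 \leq |S| \sum_{x \in S} r(x)^2.$$

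The main point is that $\sum_{x \in S} r(x)^2$ counts exactly the number of solutions to $\frac{a-b}{a-c} = \frac{a'-b'}{a'-c'}$ with all six variables in $A$. This is not literally the quantity $Q$ from Lemma 2.1, whose defining equation \eqref{energy**} is $\frac{a-b}{a'-b'} = \frac{a-c}{a'-c'}$; however, these two equations are related by cross-multiplication. Indeed, $\frac{a-b}{a-c} = \frac{a'-b'}{a'-c'}$ is equivalent to $(a-b)(a'-c') = (a-c)(a'-b')$, which is precisely equation \eqref{energy}. So $\sum_{x\in S} r(x)^2 \leq Q$ (up to the negligible degenerate contributions), and Lemma 2.1 gives $\sum_{x \in S} r(x)^2 \ll |A|^4 \log|A|$. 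Substituting into the Cauchy–Schwarz inequality yields $|A|^6 \ll |S| \cdot |A|^4 \log|A|$, hence $|S| \gg \frac{|A|^2}{\log|A|}$, as required.

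I expect the only genuinely fiddly step to be the bookkeeping of degenerate cases: matching up the normalization used to count $r(x)$ (triples with $a \neq c$) with the form of the equation counted by $Q$, and checking that excluding $b = c$ or the zero solutions costs only $O(|A|^4)$, which is absorbed into the error term already present in Lemma 2.1. Concretely, I would observe that a solution to $\frac{a-b}{a-c} = \frac{a'-b'}{a'-c'}$ gives, after relabeling the roles of the primed and unprimed variables in \eqref{energy}, exactly a solution of \eqref{energy}; the count $Q$ includes these, plus possibly some extra solutions where denominators vanish, so the inequality $\sum_x r(x)^2 \le Q$ holds comfortably. No new tools beyond Lemma 2.1 and Cauchy–Schwarz are needed; the Szemer\'edi–Trotter input has already been spent inside Lemma 2.1.
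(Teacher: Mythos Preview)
Your proposal is correct and matches the paper's own proof essentially line for line: define the representation function, apply Cauchy--Schwarz, identify $\sum_x r(x)^2$ with (a subcount of) the quantity $Q$ from Lemma~\ref{mainlemma} via cross-multiplication, and rearrange. The paper handles the degenerate cases in exactly the way you anticipate, noting that the $|A|^2$ triples with $a=c$ are negligible and that $\sum_x r(x)^2$ is strictly bounded by $Q$.
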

\begin{proof}
Let 
$$n(x):=\left| \left\{(a,b,c) \in A^3:\frac{a-b}{a-c}=x\right\}\right|$$
denote the number of representations of $x$ as an element of the set in question. We know that
$$|A|^3 \ll |A|^3-|A|^2 = \sum_x n(x).$$
Also, the quantity $\sum_x n^2(x)$ is strictly\footnote{The quantity $\sum_x n^2(x)$ is the number of solutions to \eqref{energy}, minus the number of solutions for which $a=c$ or $a'=c'$.}  less than the number of solutions to \eqref{energy}. Therefore, it follows from the Cauchy-Schwarz inequality and Lemma \ref{mainlemma} that
\begin{align*}
|A|^6 \ll \left(\sum_x n(x)\right)^2 &\leq \left|\left\{\frac{a-b}{a-c}:a,b,c \in A\right\}\right|  \sum_x n^2(x)
\\& \ll  \left|\left\{\frac{a-b}{a-c}:a,b,c \in A\right\}\right|  |A|^4 \log |A|,
\end{align*}
and the result follows after rearranging this inequality.

\end{proof}
\subsection{Remarks}
Let $E^*(A,B)$ be the \textit{multiplicative energy} of $A$ and $B$; that is, the number of solutions to
$$ab=a'b'$$
such that $a,a' \in A$ and $b,b' \in B$. Using this notation, Lemma \ref{mainlemma} can be expressed in the form of the following bound:
\begin{equation}
\sum_{a,a' \in A}E^*(a-A,a'-A) \ll |A|^4 \log |A|.
\label{energysum}
\end{equation}
See \cite[Lemma 2.4]{MORNS} for a similar bound on the sum of multiplicative energies after different additive shifts.

The proof of Lemma \ref{mainlemma} can undergo a number of small modifications in order to deduce slightly different results involving multiple sets $A,B,C, \dots \in \mathbb R$ of approximately the same size. For example, if we instead take $P=(A \cup B) \times (A \cup B)$, where $|B| \approx |A|$, then the number of solutions to \eqref{energy**} such that $a,a' \in A $ and $b,b',c,c' \in B$ is less than the number of collinear triples in the point set $P$. After repeating the argument of Lemma \ref{mainlemma}, it follows that
\begin{equation}
\sum_{a,a' \in A}E^*(a-B,a'-B) \ll |A|^4 \log |A|.
\label{energysum2}
\end{equation}
In particular, if $B=-A$, this yields
\begin{equation}
\sum_{a,a' \in A}E^*(a+A,a'+A) \ll |A|^4 \log |A|.
\label{energysum3}
\end{equation}
\section{Proof of Theorem \ref{mainthm}}
It follows from \eqref{energysum} that there exist $a,a' \in A$ such that 
\begin{equation}
E^*(a-A,a'-A) \ll |A|^2 \log |A|.
\label{exists}
\end{equation}
We also have the following well-known bound for the multiplicative energy, which follows from an application of the Cauchy-Schwarz inequality:
\begin{equation}
E^*(A,B) \geq \frac {|A|^2|B|^2}{|AB|}.
\label{CS}
\end{equation}
After comparing \eqref{exists} and \eqref{CS}, it follows that
$$|(A-a)(A-a')| \gg \frac{|A|^2}{\log |A|},$$
as required. $\square$
\subsection{Remark} By the same argument, but utilising \eqref{energysum3} in place of \eqref{energysum}, it also follows that there exist $a,a' \in A$ such that
$$|(A+a)(A+a')| \gg \frac{|A|^2}{\log |A|}.$$

\section{The complex setting}

As stated in the abstract, an advantage of this more straightforward approach is that it allows for results that were previously only known for sets of real numbers to be extended to the complex setting. The only tool used in the proofs of Theorem \ref{mainthm} and Corollary \ref{TJ} is the Szemer\'{e}di-Trotter Theorem. It is now known that this theorem holds for sets of points and lines in $\mathbb C^2$ (this was first proven by \cite{toth}, with a more modern proof given by Zahl \cite{zahl}; see also Solymosi and Tao \cite{ST}).

One can therefore repeat the analysis of this paper verbatim in the complex setting, applying the complex Szemer\'{e}di-Trotter Theorem in place of the real version, and deduce exactly the same results for a set $A$ of complex numbers. 

In particular, we deduce that for any finite set $A \subset \mathbb C$, there exist $a,a' \in A$ such that
\begin{equation}
|(A-a)(A-a')| \gg \frac{|A|^2}{\log |A|}
\label{comp1}
\end{equation}
and it follows that
\begin{equation}
|(A-A)(A-A)| \gg \frac{|A|^2}{\log |A|}.
\label{comp2}
\end{equation}
Since the earlier proof of \eqref{comp2} for real $A$ in \cite{rectangles} was based on the three dimensional incidence bounds in \cite{GK}, it was not previously known that this bound extended to the complex setting. Similarly, the approach in this paper can be used to show that for any finite set $A \subset \mathbb C$, we have
$$\left|\left\{\frac{a-b}{a-c}:a,b,c \in A\right\}\right| \gg \frac{|A|^2}{\log |A|}.$$

\section*{Acknowledgements}The author was supported by the Austrian Science Fund (FWF): Project F5511-N26, which is part of the Special Research Program ``Quasi-Monte Carlo Methods: Theory and Applications". I am grateful to Brendan Murphy for his helpful feedback. I am also grateful to Ilya Shkredov for helpful conversations and especially for his interpretation of the work of Jones \cite{TJthesis}.

\end{document}